\newtheorem{theorem}{Theorem}
\newtheorem{lemma}{Lemma}
\newtheorem{definition}{Definition}
\definecolor{brightmaroon}{rgb}{0.76, 0.13, 0.28}
\definecolor{prettygreen}{rgb}{0.13,0.76 , 0.28}
\title{Exploiting the Structure via Sketched Gradient Algorithms}
\name{Junqi Tang, Mohammad Golbabaee, Mike Davies}
\address{Institute for Digital Communications,
University of Edinburgh.
EH9 3JL,
UK}
\begin{document}
\ninept
\maketitle
\begin{abstract} 


{\it Sketched gradient algorithms} \cite{tang2016gradient} have been recently introduced for efficiently solving the large-scale constrained Least-squares regressions. In this paper we provide novel convergence analysis for the basic method {\it Gradient Projection Classical Sketch} (GPCS) to reveal the fast linear convergence rate of GPCS thanks to the intrinsic low-dimensional geometric structure of the solution prompted by constraint set. Similar to our analysis we observe computational and sketch size trade-offs in numerical experiments. Hence we justify that the combination of gradient methods and the sketching technique is a way of designing efficient algorithms which can actively exploit the low-dimensional structure to accelerate computation in large scale data regression and signal processing applications. 

\end{abstract} 
\begin{keywords}
Large data optimization, Inverse problems, Sketching, Gradient projection
\end{keywords}

\section{Introduction}


In many signal processing, data mining and machine learning applications, people encounter large scale optimization tasks which involve regression on a huge amount of data. In recent years there has been a growth in the amount of research on fast first order optimization algorithms such as the variance-reduced stochastic gradient descent \cite{2013_Johnson_Accelerating} \cite{defazio2014saga} which has scalability with respect to the amount of the data points to be processed by constructing a stochastic estimate of the truth gradient each iteration. Such novel algorithms has been shown to be effective in various large scale regression problems when compared to conventional full gradient methods such as FISTA \cite{beck2009fast}.

Classical randomized sketching techniques  \cite{clarkson2013low}\cite{2015_Pilanci_Randomized} and iterative sketching techniques \cite{2015_Pilanci_Newton}\cite{2016_Pilanci_Iterative} have been recently introduced to construct subprograms with a reduced sample-size. Meanwhile the maximum sample size reduction (aka. the smallest sketch size) is directly related to the intrinsic dimension of the desired solution prompted by low-dimensional structure such as sparsity, group sparsity and low-rank. Such sketching techniques open the door for designing optimization algorithms which are able to exploit these low-dimensional structures and hence achieve scalability and acceleration. Very recently, the sketched gradient algorithms \cite{tang2016gradient} have been proposed based on a combination of iterative sketching and projected gradient descent for constrained Least-squares regression. The sketched gradient algorithms merit the strength of both full gradient algorithms and stochastic gradient algorithms and appear to be efficient in practice.

This paper focuses on exploring how to optimally exploit the constraint set via randomized sketched gradient algorithms to speedily and approximately solving large scale constrained Least-square estimation problems. 

\section{Background}

In many signal processing and image reconstruction applications the forward model is linear, which involves the ''ground truth'' $x^\star$ which a priori is assumed to belong to model (constraint) set $\mathcal{K}$, with measurement operator $A$ and the noise vector $w$:
\begin{equation}\label{eq:1}
{y} = {A} {x}^\star + {w}, \ \ \ \  \ {x}^\star \in \mathcal{K}, \ \ \ {A} \in \mathbb{R}^{n \times d}.
\end{equation}
In this paper we focus on the large data setting $n >> d$. We wish to estimate $x^\star$ via a constrained Least-squares (LS) estimator:
	\begin{equation}\label{LS}
    x_{LS}=\arg\min_{x\in \mathcal{K}} \left\{f(x) := \|y-Ax\|^2_2 \right\}.
    \end{equation}
A popular approach for (\ref{LS}) is the Projected Gradient Descent (PGD) \cite{figueiredo2007gradient}:
\begin{equation} \label{eq:4}
    x_{j+1}=\mathcal{P}_\mathcal{K}(x_{j}-\eta A^T(Ax_{j}-y)).
\end{equation}
The PGD/ISTA algorithm and its accelerated variant FISTA \cite{beck2009fast} have been of great interest in signal processing and compressed sensing applictions since they cope with various type of signal models prompted by hard constraints or non-smooth regularization. We refer to these methods as {\it full gradient methods} (FG). The main drawback of FG is when the linear systems' size is large, each iteration of FG methods becomes costly to compute. Alternatively, recent works on sketching techniques \cite{2011_Mahoney_Randomized} \cite{drineas2011faster}\cite{2015_Pilanci_Randomized}\cite{2016_Pilanci_Iterative} and consequently the sketched gradient methods \cite{tang2016gradient} which was built on both the {\it Classical Sketch} \cite{2015_Pilanci_Randomized} as the fast initialization step and {\it Iterative Hessian Sketch} \cite{2016_Pilanci_Iterative} for solving the Least-squares regression to machine precision. In this paper we focus particularly on the novel convergence analysis of the initialization step which is called {\it Gradient Projection Classical Sketch}.

\subsection{Main contributions}

\begin{itemize}
    \item For the sketched Least-square problem where very often we may wish to have a sketch size $m$ which is smaller than the ambient dimension $d$, from an optimization point of view, it is unknown that when a first order method can be efficiently used and whether the convergence speed can be fast since the global strong convexity assumption is vacuous in this setting. We show that if a Gaussian sketching matrix is used to construct the sketched problem and we choose to use the standard projected gradient descent as the solver, this method converges towards the vicinity of $x^\star$ with a linear convergence rate, as long as the sketch size $m$ is larger than a certain measure (Gaussian width) of the intrinsic dimension of $x^\star$ associated with the constraint set $\mathcal{K}$. Moreover, the convergence speed is also a function of the sketch size choice and the statistical dimension. Hence we demonstrate the fact that by combining the projected gradient methods and randomized sketching techniques, one can design accelerated algorithms which can {\it actively} exploit the statistical structure of the signal/image to be estimated. While in previous work \cite{tang2016gradient}, the GPCS is also analyzed but in a conservative way which does not include the case of choosing a sketch size smaller than the ambient dimension.
    \item We study the computational trade-off between the sketch size and the computational cost with respect to the structure of the ground truth and the constrained set for the sketched problem solved by the projected gradient descent. 
\end{itemize}

\section{Gradient projection Classical sketch}

We first introduce the {\it Gradient Projection Classical Sketch} (GPCS) which is the initialization loop of our previously proposed {\it Gradient Projection Iterative Sketch} algorithm. This simple algorithm is basically running the PGD to solve the sketched Least-square problem (\ref{CS}). We denote $S \in \mathbb{R}^{m \times n} (m \ll n)$ as the randomized sketching operator which significantly reduces the sample dimension of the LS estimator.
	\begin{equation}\label{CS}
    \hat{x}=\arg\min_{x\in \mathcal{K}}\left\{f_0(x) := \|Sy-SAx\|^2_2\right\},
    \end{equation}
In next section, we describe the convergence behavior of the GPCS algorithm.

\begin{algorithm}\label{A1}
\SetAlgoLined

 Initialization: $x_0 = 0$\;
 Given $A \in \mathbb{R}^{n \times d}$, sketch size $m \ll n$\;
 Prior knowledge: the true solution $x$ belongs to set $\mathcal{K}$ \;
  Generate a random sketching matrix $S \in \mathcal{{R}}^{m \times n}$\;
  Calculate $SA$, $Sy$\;

 \While{$i=0:k - 1$}{

       $x_{i+1}=\mathcal{P}_\mathcal{K}(x_{i}-\eta (SA)^T(SAx_{i}-Sy))$\;

 }
Return $x = x_{k}$\;

 \caption{Gradient Projection Classical Sketch}
\end{algorithm}

\section{Convergence analysis}

We start by defining some certain properties of the operator $A$, sketching scheme $S$, and its interaction with the signal model (constraint set), in a similar manner to the analysis in \cite{tang2016gradient}\cite{2015_Oymak_Sharp} and \cite{2015_Pilanci_Randomized}: 
\begin{definition}\label{D1}
Let $\mathcal{C}$ be the smallest closed cone at $x^\star$ containing the set $\mathcal{K}-x^\star$, 
$\mathbb{S}^{d-1}$ be the unit sphere in $\mathcal{R}^d$,
$\mathcal{B}^{d}$ be the unit ball in $\mathbb{R}^d$, $z$ be an arbitrary fixed unit-norm vector in $\mathbb{R}^n$. The contraction factor $\alpha(\eta,SA)$, the error amplification factor $\beta(S,A)$ are defined as:
\begin{align}
&\alpha(\eta,SA)=\sup_{u,v \in \mathcal{C} \cap \mathcal{B}^d} v^T(I-\eta A^TS^{T}SA)u,\\
&\beta(S,A)=\sup_{v \in A\mathcal{C} \cap \mathcal{B}^n} v^T\frac{S^{T}S}{m}z.
\end{align}
\end{definition}

For the convenience of the presentation of our main theorem, we shall denote these two key factors defined in Def.\ref{D1} as $\alpha := \alpha(\eta,SA)$ and $\beta := \beta(S,A)$, respectively.

\begin{definition}\label{D2}
The cone-restricted strong convexity constant $\mu_c$ is defined as the largest positive constant which satisfies: for all $z_c \in \mathcal{C}$
\begin{equation}
    \|Az_c\|_2^2 \geq \mu_c \|z_c\|_2^2.
\end{equation}
\end{definition}

The cone restricted strong-convexity defined here can be viewed as a recovery/inversion stability measure which ensures that the original Least-squares estimator is reliable and robust to noise: $\|x_{LS} - x^\star\|_2 \leq \frac{2\|w\|_2}{\mu_c}$, see \cite[Proposition 2.2]{2012_Chandrasekaran_Convex}.

\begin{definition}\label{D3}
The Lipschitz constant $L$ for the LS (\ref{LS}) is defined as the largest singular value of the Hessian matrix $A^TA$: for all $z_d \in \mathbb{R}^d$
\begin{equation}
    \|Az_d\|_2^2 \leq L \|z_d\|_2^2,
\end{equation}
\end{definition}

Now we are ready to present our main result on linear convergence of the GPCS:
\begin{theorem}\label{T1}
   Starting from $x_0$, if the step size $\eta$, the sketching operator $S \in \mathbb{R}^{m \times n}$ and sketch size $m$ are properly chosen such that $\alpha(\eta,S^tA) < 1$, the following error bound holds:
   \begin{equation}
   \begin{aligned}
        \|x_k-x^\star\|_2 \leq \alpha^{k}\|x_0-x^\star\|_2 +   \frac{m\eta L\beta}{1-\alpha}\|w\|_2,       
   \end{aligned}
   \end{equation}
\end{theorem}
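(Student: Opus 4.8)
The plan is to extract a one-step recursion of the form $\|x_{i+1}-x^\star\|_2 \le \alpha\|x_i-x^\star\|_2 + c\,\|w\|_2$ and then unroll it as a geometric series.

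First I would write the iterate before projection as $u_i := x_i-\eta(SA)^T(SAx_i-Sy)$, so that $x_{i+1}=\mathcal{P}_\mathcal{K}(u_i)$. Because $x^\star\in\mathcal{K}$, the (firm) non-expansiveness of the Euclidean projection onto $\mathcal{K}$ gives $\|x_{i+1}-x^\star\|_2^2 \le \langle x_{i+1}-x^\star,\ u_i-x^\star\rangle$; for a non-convex $\mathcal{K}$ the same inequality persists up to a constant that can be folded into $c$. Inserting the model $y=Ax^\star+w$ (so $Sy=SAx^\star+Sw$) and collecting terms yields the key identity
\begin{equation}
u_i-x^\star \;=\; (I-\eta A^T S^T S A)(x_i-x^\star) \;+\; \eta A^T S^T S w .
\end{equation}

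Next I would bound the two inner products that arise. For the first, note that both $x_i-x^\star$ and $x_{i+1}-x^\star$ belong to $\mathcal{K}-x^\star\subseteq\mathcal{C}$; rescaling each to a unit vector of $\mathcal{C}\cap\mathcal{B}^d$ and invoking the definition of $\alpha$ in Definition~\ref{D1} gives $\langle x_{i+1}-x^\star,\,(I-\eta A^T S^T S A)(x_i-x^\star)\rangle \le \alpha\,\|x_{i+1}-x^\star\|_2\,\|x_i-x^\star\|_2$. For the second, set $z := w/\|w\|_2$ (unit-norm) and rewrite the noise term as $m\eta\|w\|_2\,\langle A(x_{i+1}-x^\star),\ \tfrac{S^T S}{m}z\rangle$; since $A(x_{i+1}-x^\star)\in A\mathcal{C}$ and $\|A(x_{i+1}-x^\star)\|_2$ is controlled by Definition~\ref{D3}, the definition of $\beta$ in Definition~\ref{D1} bounds this by $m\eta L\beta\,\|w\|_2\,\|x_{i+1}-x^\star\|_2$.

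Combining the two estimates and cancelling one factor of $\|x_{i+1}-x^\star\|_2$ (the case $x_{i+1}=x^\star$ being immediate) produces $\|x_{i+1}-x^\star\|_2 \le \alpha\|x_i-x^\star\|_2 + m\eta L\beta\|w\|_2$. Finally I would iterate this bound from $x_0$ and sum the geometric series $\sum_{j\ge 0}\alpha^{j} = 1/(1-\alpha)$, which converges since $\alpha<1$ by hypothesis, to obtain the claimed estimate. I expect the delicate point to be the very first inequality: getting the contraction \emph{without} a spurious factor of $2$ when $\mathcal{K}$ is non-convex (sparse or low-rank models), which is precisely where the cone $\mathcal{C}$ at $x^\star$ and the local geometry of $\mathcal{P}_\mathcal{K}$ must be exploited; a secondary, purely technical hazard is correctly tracking the factors of $m$ and $L$ when passing between $S^T S$ and $\tfrac{S^T S}{m}$ and between $\mathbb{R}^d$ and the range of $A$.
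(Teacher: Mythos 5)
Your proposal is correct and reproduces the paper's bound, but it handles the projection step by a different device. The paper translates the iterate by $x^\star$, dominates the projection onto $\mathcal{K}-x^\star$ by the projection onto the cone $\mathcal{C}$, and then uses the characterization $\|\mathcal{P}_{\mathcal{C}}(z)\|_2=\sup_{v\in\mathcal{C}\cap\mathcal{B}^d}v^Tz$ (Lemmas 6.2--6.4 of Oymak et al.) before splitting the supremum into the $\alpha$ and $\beta$ terms; you instead invoke firm non-expansiveness of $\mathcal{P}_\mathcal{K}$ at $x^\star\in\mathcal{K}$, i.e. $\|x_{i+1}-x^\star\|_2^2\le\langle x_{i+1}-x^\star,\;u_i-x^\star\rangle$, and test the same error decomposition $(I-\eta A^TS^TSA)h_i+\eta A^TS^TSw$ against the specific direction $x_{i+1}-x^\star\in\mathcal{C}$, then cancel one factor of $\|x_{i+1}-x^\star\|_2$. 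Both routes need convexity of $\mathcal{K}$ (yours for the projection inequality, the paper's for the cone-domination lemma), and both arrive at the identical one-step recursion $\|h_{i+1}\|_2\le\alpha\|h_i\|_2+m\eta L\beta\|w\|_2$ followed by the same geometric-series unrolling; your version is arguably more self-contained since it avoids importing the three auxiliary lemmas, while the paper's version generalizes more naturally to arguments phrased purely in terms of the cone $\mathcal{C}$. Two small remarks: (i) your parenthetical that for non-convex $\mathcal{K}$ the projection inequality "persists up to a constant that can be folded into $c$" is not accurate as stated, since such a constant would also multiply the contraction term $\alpha\|h_i\|_2$ and could destroy $\alpha<1$; this is immaterial here because the analysis is in the convex setting. (ii) In the noise term, Definition~\ref{D3} gives $\|A(x_{i+1}-x^\star)\|_2\le\sqrt{L}\,\|x_{i+1}-x^\star\|_2$, so the rigorous constant is $m\eta\sqrt{L}\beta$ rather than $m\eta L\beta$; the paper's own proof performs the same silent replacement of $\sqrt{L}$ by $L$, so your derivation matches the paper on this point.
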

\begin{proof}
The proof of Theorem \ref{T1} can be found in appendix.
\end{proof}
Theorem \ref{T1} reveals that as long as the step size, the sketching matrix and sketch size are chosen properly, the updates sequence $x_1, x_2, ..., x_k $ generated by the GPCS algorithm converge {\it exponentially}\footnote{In optimization community, this convergence rate also referred to as linear convergence} towards the vicinity of the ground truth $x^\star$ with a distance scales with the noise energy $\|w\|_2$. In the forthcoming subsection we explicitly quantify the two pillar factors $\alpha$ and $\beta$ in Theorem \ref{T1} when $S$ is a Gaussian sketch, and hence reveals the structure-exploiting property of the GPCS algorithm.

\subsection{Explicit analysis results for Gaussian sketches}	
Theorem 1 has provided us a general framework to describe the convergence of GPCS in terms of $\alpha$ and $\beta$. We can derive expressions of
these terms in terms of the Gaussian Width of the set  $\mathcal{W} := \mathcal{W}(A\mathcal{C}\cap\mathbb{S}^{n-1})$ and the sketch dimension $m$ when we choose $S$ to be a Gaussian sketching matrix. The Gaussian Width is defined as:

\begin{definition}\cite[Definition 3.1]{2012_Chandrasekaran_Convex} \label{D4}
The Gaussian width of a convex set $\Omega$ is defined as:
\begin{equation}
    \mathcal{W}(\Omega) = E_g\left( \sup_{v \in \Omega} v^Tg\right),
\end{equation}
where $g \in \mathcal{R}^n$ is draw from distribution $\mathcal{N}(0,I_n)$.
\end{definition}
The square of Gaussian width $\mathcal{W}^2(\mathcal{C}\cap\mathcal{S}^{d-1})$ is a well-known tool of measuring the statistical intrinsic dimension of $x^\star$ enforced by the constrained set $\mathcal{K}$. If $x^\star$ has only $s$ non-zero entries, and we construct the Least-square estimator with a $l_1$ constraint, the width $\mathcal{W}^2(\mathcal{C}\cap\mathcal{S}^{d-1})$ can be upper bounded by $2slog(\frac{d}{s}) + \frac{5}{4}s$ \cite[Proposition 3.10]{2012_Chandrasekaran_Convex}, which means $\mathcal{W}(\mathcal{C}\cap\mathcal{S}^{d-1})$ scales with the sparsity of $x^\star$.


In parallel to the step size choices described in \cite{2015_Oymak_Sharp} (which is about the PGD convergence analysis on solving the original LS problems where $A$ is a Gaussian map),  our analysis covers a greedy choice and a conservative choice of the step size $\eta$. We denote $b_m=\sqrt{2}\frac{\Gamma(\frac{m+1}{2})}{\Gamma(\frac{m}{2})}\approx\sqrt{m}$ as the same in \cite{tang2016gradient} and \cite{2015_Oymak_Sharp}. For the greedy choice, we can bound $\alpha$ as described in Lemma \ref{L1}.

\begin{lemma}\label{L1}
(Greedy step size) If the step-size $\eta = \frac{1}{b_m^2L}$, and  the entries of the sketching matrix $S$ are i.i.d drawn from Normal distribution, then:
\begin{equation}
    \alpha(\eta, SA) \leq (1 - \frac{\mu_c}{L})(1 + \frac{(\mathcal{W} + \theta)^2}{b_m^2}) + \frac{\sqrt{8}(\mathcal{W} + \theta)}{b_m},
\end{equation}
with probability at least $1 - 8e^{-\frac{\theta^2}{8}}$.
\end{lemma}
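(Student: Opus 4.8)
The plan is to bound $\alpha(\eta,SA)=\sup_{u,v\in\mathcal{C}\cap\mathcal{B}^d}v^T(I-\eta A^TS^TSA)u$ (the supremum is attained for $u,v$ on the sphere) by a two–sided control of $\|Sx\|_2$ on the image cone $A\mathcal{C}$, obtained from Gordon's comparison inequality, together with the deterministic restricted-conditioning bounds of Definitions \ref{D2}--\ref{D3}. With the greedy step $\eta=\tfrac{1}{b_m^2L}$ I would split the iteration operator as
\[
I-\eta A^TS^TSA=\Big(I-\tfrac{1}{L}A^TA\Big)+\eta A^T\big(b_m^2 I-S^TS\big)A ,
\]
which is an identity precisely because $\eta b_m^2=\tfrac{1}{L}$. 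This isolates a deterministic ``ideal gradient step'' $M_1:=I-\tfrac{1}{L}A^TA$ (the step $SA$ would have if $S^TS$ were exactly $b_m^2 I$ on the relevant subspace) and a random ``sketch fluctuation'' $M_2:=\eta A^T(b_m^2 I-S^TS)A$; I bound the contributions of $M_1$ and $M_2$ to the supremum separately and add them.

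The $M_1$ term is purely deterministic. Since $A^TA\preceq LI$ by Definition \ref{D3}, $M_1\succeq 0$, so Cauchy--Schwarz for positive-semidefinite forms gives $v^TM_1u\le\sqrt{v^TM_1v}\sqrt{u^TM_1u}$, and for any unit $w\in\mathcal{C}$ the cone-restricted strong convexity (Definition \ref{D2}) gives $w^TM_1w=1-\tfrac{1}{L}\|Aw\|_2^2\le 1-\tfrac{\mu_c}{L}$. Hence this term contributes at most $1-\mu_c/L$.

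For the $M_2$ term I would invoke Gordon's theorem on the set $T:=A\mathcal{C}\cap\mathbb{S}^{n-1}$, whose Gaussian width is $\mathcal{W}$: the maps $S\mapsto\inf_{x\in T}\|Sx\|_2$ and $S\mapsto\sup_{x\in T}\|Sx\|_2$ are $1$-Lipschitz in the Frobenius norm and their expectations lie in $[b_m-\mathcal{W},\,b_m+\mathcal{W}]$, so Gaussian Lipschitz concentration (a small number of tail events, union-bounded with constants absorbed into the stated $1-8e^{-\theta^2/8}$) yields, uniformly over $x\in A\mathcal{C}$,
\[
(b_m-\mathcal{W}-\theta)\|x\|_2\le\|Sx\|_2\le(b_m+\mathcal{W}+\theta)\|x\|_2 ,
\]
and therefore $\big|x^T(b_m^2 I-S^TS)x\big|\le\big(2b_m(\mathcal{W}+\theta)+(\mathcal{W}+\theta)^2\big)\|x\|_2^2$ on the cone. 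Writing $p=Au$, $q=Av$ and using that $\mathcal{C}$ — hence $A\mathcal{C}$ — is a convex cone (so $p+q\in A\mathcal{C}$), the polarization identity $q^TNp=\tfrac12\big[(p+q)^TN(p+q)-p^TNp-q^TNq\big]$ with $N=b_m^2 I-S^TS$ transfers this quadratic-form control to the bilinear form; bounding $\|p\|_2^2,\|q\|_2^2\le L$ and $\|p+q\|_2^2\le 4L$ via Definition \ref{D3}, substituting $\eta=\tfrac{1}{b_m^2L}$, and using the near-alignment constraint that couples the three norms in the polarization (so that one does not merely bound the three terms independently), the $M_2$ contribution collapses to $(1-\tfrac{\mu_c}{L})\tfrac{(\mathcal{W}+\theta)^2}{b_m^2}+\tfrac{\sqrt8(\mathcal{W}+\theta)}{b_m}$; adding the $M_1$ contribution $1-\mu_c/L$ gives the claimed bound.

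The main obstacle is that in the interesting regime $m<d$ the operator $I-\eta A^TS^TSA$ is \emph{not} positive semidefinite (its smallest eigenvalue over all of $\mathbb{R}^d$ can be strongly negative), so no global operator-norm bound is available and the cone restriction must be carried through the entire argument; the technical crux is then to convert Gordon's uniform control of the \emph{quadratic} form $x^TNx$ on $A\mathcal{C}$ into a bound on the \emph{bilinear} form $v^TNu$ with the sharp constant $\sqrt8$ — the convex-cone polarization produces the correct shape immediately, but extracting the exact constant (rather than the cruder factor one gets by treating the polarization terms separately) requires the careful accounting indicated above. A secondary, routine point is organizing the Gaussian concentration so that the total failure probability comes out as $8e^{-\theta^2/8}$.
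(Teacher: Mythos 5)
Your overall architecture --- splitting $I-\eta A^TS^TSA$ into the deterministic part $M_1=I-\tfrac1L A^TA$ and the fluctuation $M_2=\eta A^T(b_m^2I-S^TS)A$ and bounding the two suprema separately --- is not the paper's route, and its decisive step is a genuine gap rather than the ``routine accounting'' you defer: the claim that the $M_2$ contribution collapses to $(1-\tfrac{\mu_c}{L})\tfrac{(\mathcal{W}+\theta)^2}{b_m^2}+\tfrac{\sqrt8(\mathcal{W}+\theta)}{b_m}$ is unproven and, as an intermediate statement, structurally untenable. $M_2$ does not involve $\mu_c$ at all, so once $\sup v^TM_1u$ and $\sup v^TM_2u$ are separated there is no mechanism that can place the factor $(1-\mu_c/L)$ in front of the quadratic term. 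In the extreme case $\mu_c=L$ your claim forces $\sup_{u,v}\eta\,(Av)^T(b_m^2I-S^TS)(Au)\le \tfrac{\sqrt8(\mathcal{W}+\theta)}{b_m}$ with no quadratic term, whereas when $\mathcal{C}$ is a $k$-dimensional subspace on which $A$ acts as a scaled isometry this supremum is essentially $\tfrac{1}{b_m^2}\|b_m^2I_k-G^TG\|$ for an $m\times k$ Gaussian $G$, i.e.\ of order $\tfrac{2b_m\mathcal{W}+\mathcal{W}^2}{b_m^2}$: the quadratic piece is genuinely present and can only be hidden inside the slack of the linear constant when $\mathcal{W}\ll b_m$. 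Moreover, even granting your uniform quadratic-form control on $A\mathcal{C}$, the three-term polarization $q^TNp=\tfrac12[(p+q)^TN(p+q)-p^TNp-q^TNq]$ with $\|p\|_2^2,\|q\|_2^2\le L$ and $\|p+q\|_2^2\le 4L$ bounded independently yields a linear term of about $\tfrac{6(\mathcal{W}+\theta)}{b_m}$, not $\tfrac{\sqrt8(\mathcal{W}+\theta)}{b_m}$; the ``near-alignment constraint'' invoked to recover the constant is never specified, and you concede yourself that this is the crux.

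For contrast, the paper keeps the deterministic and random parts together precisely so that the $\mu_c$--$L$ cancellation can occur inside a single polarization: it bounds $v^T(I-\eta A^TS^TSA)u\le\tfrac14\{\|u+v\|_2^2-\eta\|SA(u+v)\|_2^2-\|u-v\|_2^2+\eta\|SA(u-v)\|_2^2\}$ and then applies the Gordon-type bounds of Oymak et al.\ (their Lemma 6.8), $\|SA(u+v)\|_2\ge\sqrt{\mu_c}\,(b_m\|u+v\|_2-2(\mathcal{W}+\theta))$ and $\|SA(u-v)\|_2\le\sqrt L\,(b_m\|u-v\|_2+2(\mathcal{W}+\theta))$. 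Upon expansion the $(\mathcal{W}+\theta)^2$ cross terms carry the weight $\eta(L-\mu_c)$, which with $\eta=\tfrac{1}{b_m^2L}$ gives exactly the $(1-\mu_c/L)\tfrac{(\mathcal{W}+\theta)^2}{b_m^2}$ term, while the linear term is controlled through $\|u+v\|_2^2+\|u-v\|_2^2\le4$, hence $\|u+v\|_2+\|u-v\|_2\le2\sqrt2$. Note this uses the direction $u-v$, which your choice of polarization (made to keep all arguments inside the cone) gives up; the upper bound does not require $u-v\in\mathcal{C}$, the factor $2(\mathcal{W}+\theta)$ already accounting for the sum/difference set. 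To make your decomposition work you would have to prove a bilinear deviation bound for $S$ over $A\mathcal{C}$ with sharp constants and accept a final bound without the $(1-\mu_c/L)$ factor on the quadratic term; otherwise the direct polarization of the full operator, as in the paper, is the natural repair.
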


This result for the greedy step size is reminiscent of the result for greedy step size described in \cite[Theorem 2.2]{2015_Oymak_Sharp}, in a sense that when $\frac{L}{\mu_c} \to 1$, we recover their convergence result. However, Lemma 1 does not ensure $\alpha < 1$ since it inherently demands a sketch size $m \gtrsim \left(\mathcal{W}\frac{L}{\mu_c}\right)^2$. This sketchsize requirement can be moderated, at the cost of a more conservative stepsize:

\begin{lemma}\label{L2}
(Conservative step size) If the step-size $\eta = \frac{1}{L(b_m + \sqrt{d} + \theta)^2}$, and  the entries of the sketching matrix $S$ are i.i.d drawn from Normal distribution, then:
\begin{equation}
    \alpha(\eta, SA) \leq \left\{1-\frac{\mu_c}{L}\frac{(b_m-\mathcal{W}- \theta)^2}{(b_m+\sqrt{d}+ \theta)^2}\right\}.
\end{equation}
with probability at least $(1-2e^{-\frac{\theta^2}{2}})$.
\end{lemma}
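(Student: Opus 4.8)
The plan is to exploit the fact that the conservative step size is tuned precisely so that $M:=I-\eta A^TS^TSA$ becomes positive semidefinite with high probability; once this is secured, a Cauchy--Schwarz inequality for the semi-norm induced by $M$ collapses the supremum over the pair $(u,v)$ in Definition~\ref{D1} into a single cone-restricted eigenvalue estimate. So I would first control $\eta\,\lambda_{\max}(A^TS^TSA)=\eta\|SA\|_2^2$. Writing the SVD $A=U_A\Sigma_A V_A^T$, the matrix $SU_A$ has i.i.d.\ standard Gaussian entries because $U_A$ has orthonormal columns, so $\|SA\|_2\le\|SU_A\|_2\,\|A\|_2\le\sqrt{L}\,\|SU_A\|_2$ by Definition~\ref{D3}. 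Gordon's inequality combined with Gaussian Lipschitz concentration gives $\|SU_A\|_2=\sup_{v\in\mathbb{S}^{d-1}}\|SU_A v\|_2\le b_m+\sqrt{d}+\theta$ with probability at least $1-e^{-\theta^2/2}$; on this event $\eta\|SA\|_2^2\le 1$ and hence $M\succeq 0$.

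Given $M\succeq 0$, for any $u,v\in\mathcal{C}\cap\mathcal{B}^d$ the Cauchy--Schwarz inequality for the $M$-seminorm yields $v^TMu\le\sqrt{v^TMv}\,\sqrt{u^TMu}$, and since $v^TMv=\|v\|_2^2\,(v/\|v\|_2)^TM(v/\|v\|_2)\le(v/\|v\|_2)^TM(v/\|v\|_2)$ with $v/\|v\|_2\in\mathcal{C}\cap\mathbb{S}^{d-1}$ (and likewise for $u$), I obtain
\[
\alpha(\eta,SA)\ \le\ \sup_{w\in\mathcal{C}\cap\mathbb{S}^{d-1}} w^TMw\ =\ \sup_{w\in\mathcal{C}\cap\mathbb{S}^{d-1}}\bigl(1-\eta\|SAw\|_2^2\bigr)\ =\ 1-\eta\inf_{w\in\mathcal{C}\cap\mathbb{S}^{d-1}}\|SAw\|_2^2 .
\]

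It remains to lower bound $\inf_{w\in\mathcal{C}\cap\mathbb{S}^{d-1}}\|SAw\|_2$. For such $w$ I would factor $\|SAw\|_2=\|Aw\|_2\,\|S(Aw/\|Aw\|_2)\|_2$: Definition~\ref{D2} gives $\|Aw\|_2\ge\sqrt{\mu_c}$, while $Aw/\|Aw\|_2\in A\mathcal{C}\cap\mathbb{S}^{n-1}$, so the lower tail of Gordon's comparison theorem (escape through a mesh) together with concentration gives $\inf_{z\in A\mathcal{C}\cap\mathbb{S}^{n-1}}\|Sz\|_2\ge b_m-\mathcal{W}-\theta$ with probability at least $1-e^{-\theta^2/2}$, which is informative exactly when $b_m>\mathcal{W}+\theta$, i.e.\ $m\gtrsim\mathcal{W}^2$. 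Hence $\inf_{w\in\mathcal{C}\cap\mathbb{S}^{d-1}}\|SAw\|_2^2\ge\mu_c(b_m-\mathcal{W}-\theta)^2$, and substituting $\eta=1/\bigl(L(b_m+\sqrt{d}+\theta)^2\bigr)$ into the display above produces exactly the asserted bound. A union bound over the two Gaussian events above (invoked with the same $\theta$) accounts for the probability $1-2e^{-\theta^2/2}$.

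The main obstacle is the coupling in the first step: the whole argument rests on $M\succeq 0$, which is the reason the step size must shrink like $1/(b_m+\sqrt{d})^2$ rather than the greedy $1/b_m^2$ of Lemma~\ref{L1}; the extra $\sqrt{d}$ is the unavoidable price of bounding $\|SA\|_2$ over the \emph{entire} $d$-dimensional range of $A$ rather than only over the descent cone $\mathcal{C}$. Keeping that $\sqrt d$ term honest, and making sure the Gordon lower bound is only used where $b_m-\mathcal{W}-\theta\ge 0$ so that squaring it is legitimate, are the points that need care; everything else is routine Gaussian-process estimation.
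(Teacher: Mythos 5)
Your proof is correct, and it reaches the stated bound by a genuinely different deterministic argument than the paper, although the probabilistic ingredients are the same. The paper also invokes exactly the two Gordon-type bounds you use — $\|SAz\|_2\le\sqrt{L}(b_m+\sqrt{d}+\theta)\|z\|_2$ for all $z\in\mathbb{R}^d$ and $\|SAz_c\|_2\ge\sqrt{\mu_c}(b_m-\mathcal{W}-\theta)\|z_c\|_2$ for $z_c\in\mathcal{C}$, each with failure probability $e^{-\theta^2/2}$, combined by a union bound — but it combines them through the polarization identity $v^T M u=\tfrac14\bigl[(u+v)^TM(u+v)-(u-v)^TM(u-v)\bigr]$ with $M=I-\eta A^TS^TSA$: the lower bound is applied to $u+v$ (which requires $u+v\in\mathcal{C}$, i.e.\ convexity of the cone), the upper bound to $u-v$, and the conservative step size is chosen precisely so that the $\|u-v\|_2^2$ terms cancel, after which $\|u+v\|_2^2\le 4$ gives the result. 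You instead observe that the same conservative step size forces $M\succeq 0$ on the high-probability event, and then Cauchy--Schwarz in the $M$-seminorm collapses the off-diagonal supremum to $\sup_{w\in\mathcal{C}\cap\mathbb{S}^{d-1}}w^TMw=1-\eta\inf_w\|SAw\|_2^2$, to which only the cone-restricted lower bound is applied. Your route avoids the polarization bookkeeping and does not require $u+v\in\mathcal{C}$ (so convexity of $\mathcal{C}$ is not needed at that step), at the price of the extra lemma $M\succeq 0$; the paper's polarization route is the one that also extends to the greedy step size of Lemma~\ref{L1}, where $M$ is not positive semidefinite and your reduction would not apply. Both arguments implicitly need $b_m\ge\mathcal{W}+\theta$ (i.e.\ $m\gtrsim\mathcal{W}^2$) so that squaring the Gordon lower bound is legitimate — a point the paper leaves tacit and you correctly make explicit.
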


From Lemma \ref{L2} we can see that when the conservative step size is used, the sketch size we need to ensure $\alpha < 1$ is only $m \gtrsim \mathcal{W}^2$.

\begin{lemma}\label{L3}
(Bound on noise amplification factor $\beta$) If the entries of the sketching matrix $S$ are i.i.d drawn from Normal distribution, then:
\begin{equation}
    \beta \leq 1 + \frac{\sqrt{2}b_m(\mathcal{W} + \theta)}{m} + |\frac{b_m^2}{m} - 1|,
\end{equation}
with probability at least $(1-2e^{-\frac{\theta^2}{2}})$.
\end{lemma}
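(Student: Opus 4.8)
The plan is to exploit the fact that, in contrast to $\alpha$, the factor $\beta$ is a \emph{linear} functional of the Gaussian matrix $S$ (through $S^TS$) evaluated at the \emph{fixed} unit vector $z$, so that only scalar Gaussian concentration and the Gaussian width are needed, not matrix-deviation inequalities. Let $P$ denote orthogonal projection onto $z^\perp$. First I would centre: since $E[S^TS]=mI$ we have $\tfrac1m S^TSz = z + \tfrac1m(S^TS-mI)z$, hence, using $v^Tz\le\|v\|_2\|z\|_2\le1$,
\[
\beta \;\le\; 1 \;+\; \frac1m\,\sup_{v\in A\mathcal{C}\cap\mathcal{B}^n} v^T(S^TS-mI)z .
\]
The remaining term I would split along $z$ and $z^\perp$: writing $v=(v^Tz)z+Pv$ (so $|v^Tz|\le1$, $\|Pv\|_2\le1$) and using $z^T(S^TS-mI)z=\|Sz\|_2^2-m$ together with $P(S^TS-mI)z=PS^TSz$,
\[
v^T(S^TS-mI)z \;\le\; \big|\,\|Sz\|_2^2-m\,\big| \;+\; \langle Pv,\; PS^TSz\rangle .
\]

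For the ``diagonal'' piece: since $\|z\|_2=1$, $Sz$ is a standard Gaussian vector in $\mathbb{R}^m$, so $E\|Sz\|_2=b_m$ and, as $S\mapsto\|Sz\|_2$ is $1$-Lipschitz in the Frobenius norm, $\big|\,\|Sz\|_2-b_m\,\big|\le\theta$ with probability at least $1-2e^{-\theta^2/2}$. Hence $\big|\,\|Sz\|_2^2-m\,\big|\le\big|b_m^2-m\big|+\big|\,\|Sz\|_2^2-b_m^2\,\big|\le\big|b_m^2-m\big|+(2b_m\theta+\theta^2)$, which after dividing by $m$ yields the $\big|\tfrac{b_m^2}{m}-1\big|$ term of the statement plus a lower-order $O(b_m\theta/m)$ remainder (using $b_m^2\le m$). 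For the ``off-diagonal'' piece I would use independence: since $Pv\perp z$, the jointly Gaussian vectors $SPv$ and $Sz$ are uncorrelated, hence independent; conditioning on $Sz=g$, the vector $PS^TSz=PS^Tg$ is conditionally a centred Gaussian on $z^\perp$ with covariance $\|g\|_2^2\,P$, i.e. admits the representation (in law) $PS^TSz=\|Sz\|_2\,h$ with $h$ a standard Gaussian on $z^\perp$ that is independent of $Sz$. Consequently, since $h\in z^\perp$ gives $\langle Pv,h\rangle=\langle v,h\rangle$,
\[
\sup_{v\in A\mathcal{C}\cap\mathcal{B}^n}\langle Pv,\,PS^TSz\rangle \;=\; \|Sz\|_2\;\sup_{v\in A\mathcal{C}\cap\mathcal{B}^n}\langle v,\,h\rangle .
\]
The supremum on the right is at most the one over $A\mathcal{C}\cap\mathbb{S}^{n-1}$; its $h$-expectation is at most $\mathcal{W}$ (restricting a standard Gaussian to the subspace $z^\perp$ cannot increase the width, as one sees by writing the full Gaussian as $h+\xi z$ and applying Jensen in $\xi$), and it is $1$-Lipschitz in $h$, hence at most $\mathcal{W}+\theta$ with probability at least $1-e^{-\theta^2/2}$; with $\|Sz\|_2\le b_m+\theta$ this piece is bounded by $(b_m+\theta)(\mathcal{W}+\theta)/m$.

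Finally I would collect the estimates by a union bound over the (independent) concentration events, and absorb the lower-order $\theta$-terms into $\sqrt2\,b_m(\mathcal{W}+\theta)/m$ (which is legitimate in the regime $\theta=o(\min\{b_m,\mathcal{W}\})$ of interest), giving $\beta\le 1+\tfrac{\sqrt2\,b_m(\mathcal{W}+\theta)}{m}+\big|\tfrac{b_m^2}{m}-1\big|$ with probability at least $1-2e^{-\theta^2/2}$. I expect the main obstacle to be the off-diagonal piece: cleanly establishing the independence of $SPv$ and $Sz$ and the resulting conditional-Gaussian representation $PS^TSz=\|Sz\|_2\,h$, and then correctly transferring the subspace-restricted Gaussian width back to the full-space quantity $\mathcal{W}=\mathcal{W}(A\mathcal{C}\cap\mathbb{S}^{n-1})$; the remaining steps are routine scalar Gaussian concentration and bookkeeping of constants.
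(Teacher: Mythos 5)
Your route is genuinely different from the one the paper relies on. The paper omits the proof and defers to \cite[Proposition 2]{tang2016gradient}, which (like the appendix proofs of Lemmas \ref{L1} and \ref{L2}) runs through the polarization identity $v^TS^TSz=\tfrac14\left(\|S(v+z)\|_2^2-\|S(v-z)\|_2^2\right)$ together with two-sided Gordon-type width bounds on $\|S(v\pm z)\|_2$: in the difference the $(\mathcal{W}+\theta)^2$ terms cancel, the $b_m^2$ terms collapse to $(b_m^2/m)\,v^Tz$ (whence the $|b_m^2/m-1|$ term), and the cross terms are bounded via $\|v+z\|_2+\|v-z\|_2\le 2\sqrt{2}$ to give exactly $\sqrt{2}\,b_m(\mathcal{W}+\theta)/m$, with probability $1-2e^{-\theta^2/2}$ for every $\theta$. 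Your alternative -- centering by $E[S^TS]=mI$, splitting $v$ along $z$ and $z^\perp$, the conditional representation $PS^TSz=\|Sz\|_2\,h$ with $h$ standard Gaussian on $z^\perp$ independent of $Sz$, and the Jensen argument showing the restricted width is at most $\mathcal{W}$ -- is probabilistically sound and arguably more transparent about where the randomness enters.

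The gap is the final bookkeeping. What your argument actually yields is $\beta\le 1+\left|\tfrac{b_m^2}{m}-1\right|+\tfrac{(b_m+\theta)(\mathcal{W}+\theta)+2b_m\theta+\theta^2}{m}$ with probability at least $1-3e^{-\theta^2/2}$ (union bound over a two-sided event of probability $1-2e^{-\theta^2/2}$ and a one-sided event of probability $1-e^{-\theta^2/2}$; independence does not improve this). To reach the stated bound you must absorb $\theta(\mathcal{W}+\theta)/m+(2b_m\theta+\theta^2)/m$ into the slack $(\sqrt{2}-1)\,b_m(\mathcal{W}+\theta)/m$, which requires $\theta\lesssim\min\{b_m,\mathcal{W}\}$ and already fails, e.g., at $\theta=\mathcal{W}$; the lemma, however, is asserted for arbitrary $\theta$, and the probability $1-2e^{-\theta^2/2}$ is also not attained. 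So as written you prove a slightly weaker statement: either restate the lemma with your constants and probability, or switch to the polarization identity, where the problematic $\theta$-cross terms cancel automatically and the claimed constants come out exactly.
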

\begin{proof}
The proofs of Lemma \ref{L1}, \ref{L2} and \ref{L3} can be found in appendix.
\end{proof}
Lemma \ref{L2} and \ref{L3} reveals that the sketch size $m$ has an impact on both the convergence speed and the noise amplification. For the convergence speed, the larger the sketch size $m$ w.r.t the Gaussian width $\mathcal{W}$ is, the steeper this convergence can be, but on the other hand we should not choose the sketch size to be too large since each iteration will become more expensive to compute hence there exists a trade-off between sketch size and computation. The noise amplification is a decreasing function w.r.t the sketch size $m$, which means the larger sketch size we choose, the accuracy of the GPCS output will increase. We next explore these theoretical findings through numerical experiments.

\section{Numerical experiments}
\subsection{Synthetic constrained LS estimation example}
We first test the behaviour of GPCS algorithm with various sketch sizes for sparse linear inversion task which recovers a sparse vector $x^\star$ from noiseless measurements $y = Ax$. The $l_1$ norm of $x^\star$ is assumed known as a prior hence hence we construct the constraint set $\mathcal{K} = \left\{ \forall v : \|v\|_1 \leq \|x^\star\|_1\right\}$.

The experiments are executed on a PC (2.6 GHz CPU, 1.6 GB RAM) with MATLAB R2015b.

The details of the experimental setting can be found in Table 1, and meanwhile the procedure of generating the synthetic data matrix A with a condition number $\kappa$ follows \cite{tang2016gradient}:

1) Generate a random matrix $A$ sized $n$ by $d$ using MATLAB command Randn.

2) Compute the Singular Value Decomposition of $A$: $A = U\Sigma V^T$ and modify the singular values $e_i = diag(\Sigma)_i$ by:
\begin{equation}
    e_{i} = \frac{e_{i - 1}}{\kappa^{\frac{1}{d}}},
\end{equation}
to achieve the condition number of $\frac{L}{\mu} = 10^6$.

The performance of the Projected Gradient and fast gradient method (FISTA) for SYN-SLI is shown in figure 1. The step sizes for all the algorithms are generated by the line-search given by \cite{nesterov2007gradient}. Although in this paper we analyze the explicit convergence speed and noise amplification for the Gaussian Sketch as a motivational theory, this type of sketches in costly to compute. In practice, instead of using directly the Gaussian sketch, people use faster sketches such as the Fast Johnson-Lindenstrauss Transform (FJLT) \cite{2008_Ailon_Fast}\cite{2009_Ailon_Fast}, the sparse JLT \cite{2014_Kane_Sparser} and the Count Sketch \cite{clarkson2013low}. We choose to use the Count-Sketch \cite{clarkson2013low} for the GPCS to speedily produce the sketched matrix $SA$.

This experimental result confirms that for noiseless inversion $y = Ax$, the GPCS converges towards the ground truth with an exponential rate as our theory predicts, and also the best convergence is given by appropriate median sketch size choices ($m = 700, 1100$), which are some factors larger than the sparsity $s = 50$, but less than the ambient dimension $d = 1500$. The GPCS provides significant computational benefits over the full gradient methods PGD and FISTA on this large scale inversion task.

We then turn to the noisy set up for SYN-SLI $y = Ax + w$ (Fig. 2). The Gaussian noise vector $w$ satisfies $\frac{\|Ax^\star\|_2}{\|w\|_2} = 30$. Here we find out that as our theory predicted, the GPCS converges to a vicinity of the ground truth $x^\star$, meanwhile the larger the sketch size is, the more accurate the solution is. That is, the GPCS converge to an approximated solution of the Least-squares, and the approximation accuracy is determined by the noise level and the sketch size $m$. As discussed in \cite{tang2016gradient}, if one needs to solve the LS to machine precision in the presence of noise $\|w\|_2 > 0$, one can use the GPCS as a fast initialization step and then continue by GPIHS which is based on the ''iterative'' sketches \cite{2016_Pilanci_Iterative}.
\begin{table}[t]\label{Syn}
\caption{Experiment settings. (*) s denotes sparsity of the ground truth $x^\star$}
\label{sample-table}
\vskip 0.15in
\begin{center}
\begin{small}
\begin{sc}
\begin{tabular}{lcccr}
\hline
Data set & Size & (*)$s$ & $\frac{L}{\mu}$\\
\hline
Syn-SLI & (100000, 1500) & 50 & $10^6$ \\
Syn1    & (10000, 100) & 5 & $10^6$ \\
Syn2    & (100000, 100) & 5 & $10^6$ \\
Syn3      & (200000, 100) & 5 & $10^6$  \\
\hline
\end{tabular}
\end{sc}
\end{small}
\end{center}
\vskip -0.1in
\end{table}
	\begin{figure}[t] 
	\centering	
         \includegraphics[width=80mm]{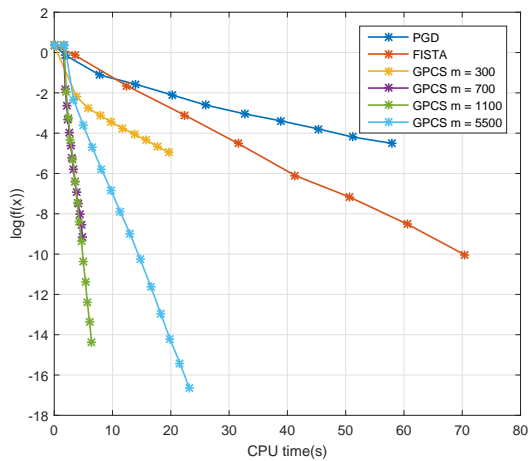}
         	\caption{ Large scale noiseless sparse linear inversion experiment (Syn-SLI), $y = Ax$  }
	\end{figure} 
	\begin{figure}[t] 
	\centering	
         \includegraphics[width=80mm]{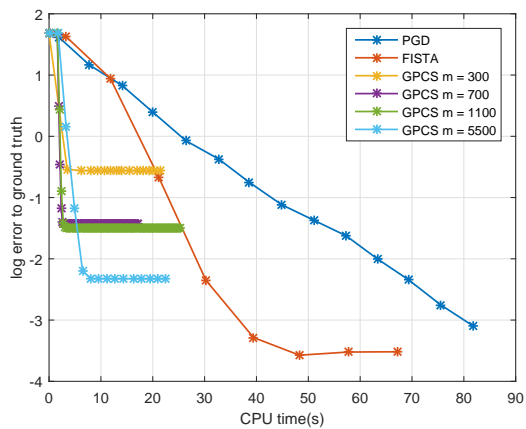}
         	\caption{ Large scale noisy Least-squares regression experiment (Syn-SLI), $y = Ax + w$ }
	\end{figure} 

\subsection{Computational and sketch size trade-off}

We finally examine the computational cost of GPCS on the linear inversion task $y = Ax$ through different choices of sketch size on three synthetic examples (Syn1, Syn2 and Syn3). Here we test GPCS on both the Gaussian Sketch and the Count-Sketch which is more computationally efficient. We run GPCS with sketch size from 10 to 1000 until a fixed accuracy $\|x^t - x^\star\|_2 \leq 10^{-4}$ is achieved, or when the computational budget has run out. We average the results of 20 random trials.

From the experimental results (Fig. 3) we can observe a sharp trade-off phenomenon as our theory predicted. When the sketch size is too near to the intrinsic dimension, the GPCS takes a huge cost to achieve the targeted accuracy; but if we increase the sketch size by a small amount, the computational cost drops radically to a ''sweet spot'', then if we continue to increase the sketch size, the cost increases again since each iteration's cost is more expensive for a large sketch size. We can also observe that the optimal choice of sketch size is not a function of the data sample size $n$, as our theory predicted. Surprisingly we see that the larger the $n$ is, the behavior of the Gaussian Sketch and Count Sketch become more similar as shown in the right hand figures in Fig 3. It would be an interesting future research direction to investigate in theory the relationship between the performance of the practical sparse embedding schemes such as the Count Sketch, and the properties of $A$, e.g. the sample size $n$, the parameter dimension $d$ and the conditioning $\frac{L}{\mu}$, the distribution of its singular values and singular vectors, etc.

	\begin{figure}[t] 
	\centering	
		\includegraphics[width=90mm]{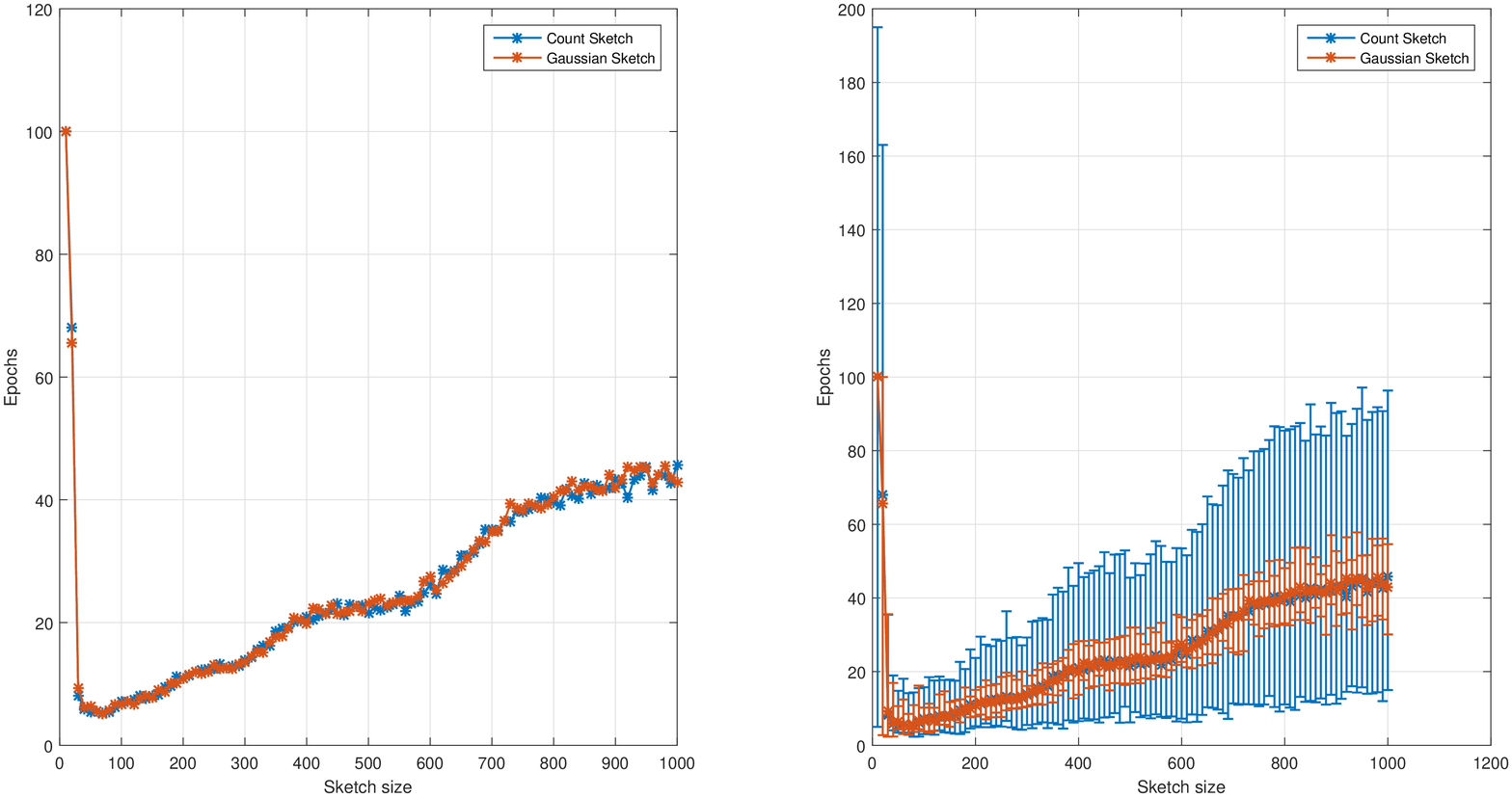}
		\includegraphics[width=90mm]{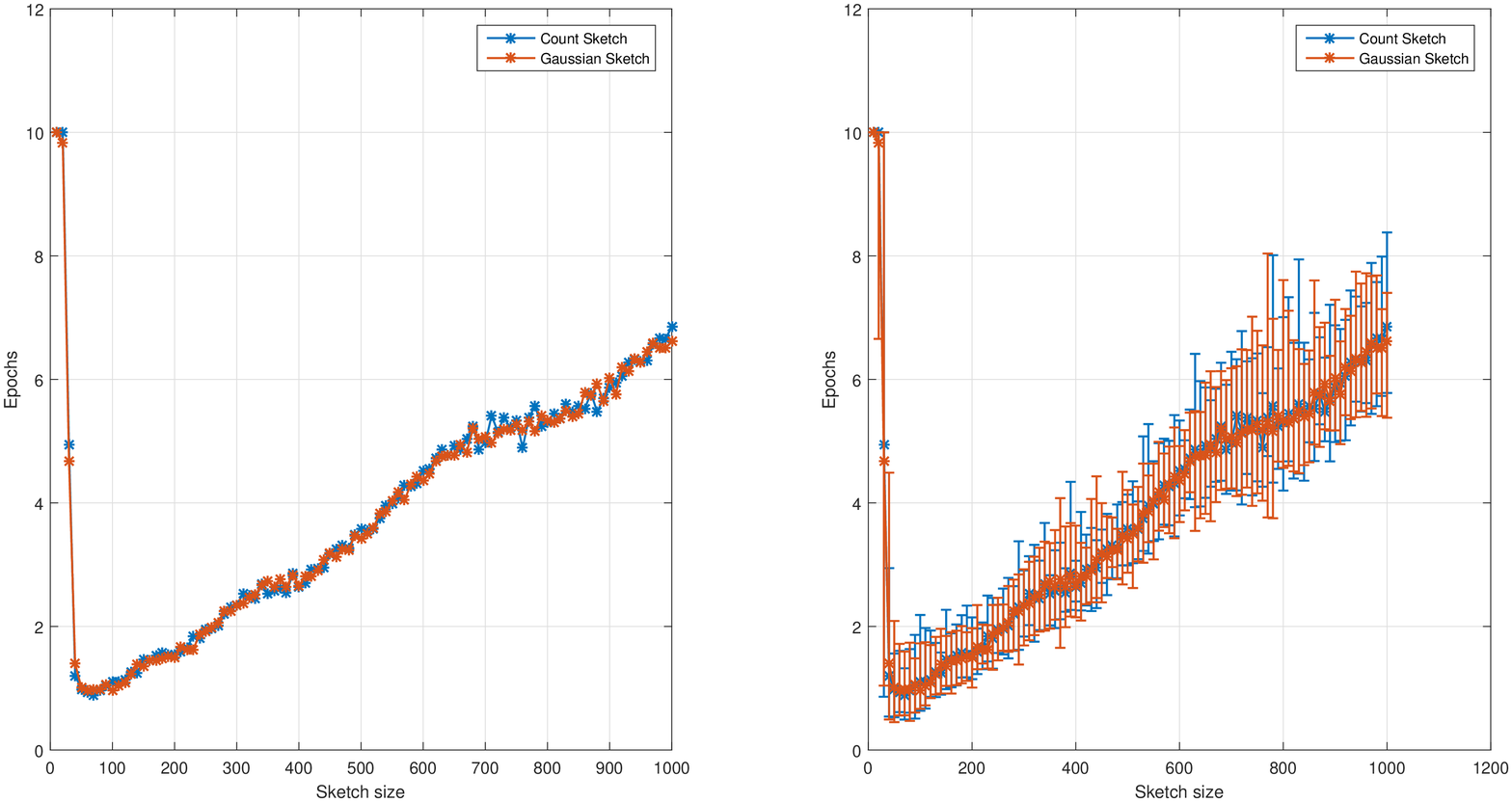}
		\includegraphics[width=90mm]{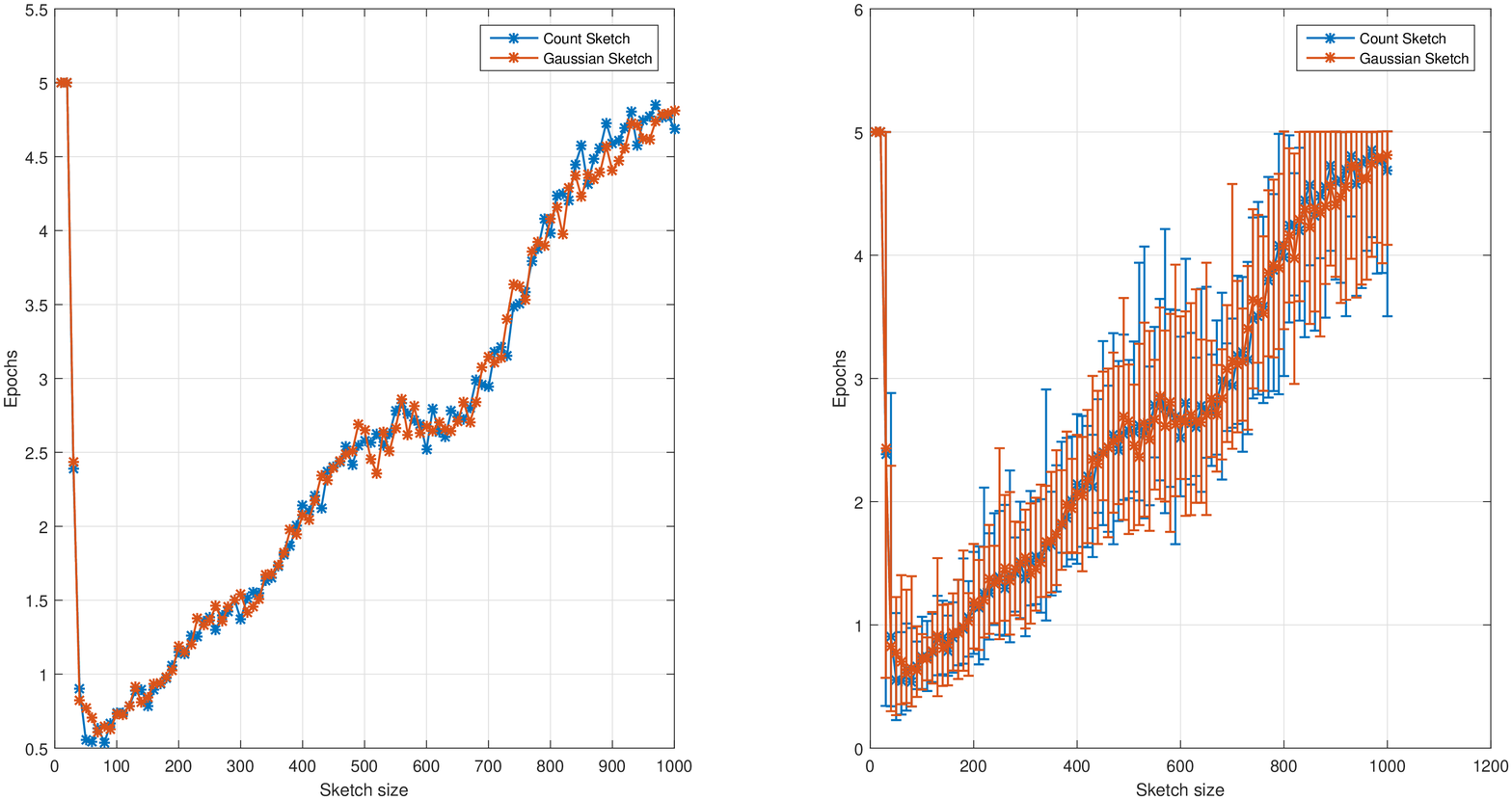}
	\caption{ Computational and sketch size trade-off experiments for GPCS on $l_1$ constrained linear system $y = Ax$. From top to bottom : Syn1, Syn2 and Syn3; from left to right: average curve (on 20 trials) and error bar (the maximum and minimum epoch counts in these 20 trials)}
	\end{figure} 

\section{Conclusion}

We prove for the first time that when the original constrained Least-square problem satisfies the robust recovery condition, the exponential convergence holds for the GPCS algorithm towards a vicinity of the ground truth without any assumption on the strong convexity of the sketched Least-squares, thanks to the concentration results for the Gaussian sketch and the intrinsic dimension of the ground truth. The numerical experiments vindicate our theoretical findings such as the convergence speed and noise amplification of GPCS algorithm. We also demonstrate the huge potential of the sketched gradient method based on exploiting the low dimensional structure (such as sparsity) via sketching techniques, by comparing with the well known full gradient methods PGD and FISTA.

\section{acknowledgements}
JT, MG and MD would like to acknowledge the support from H2020-MSCA-ITN Machine Sensing Training Network (MacSeNet), project 642685; EPSRC Compressed Quantitative MRI grant, number EP/M019802/1; and ERC Advanced grant, project 694888, C-SENSE, respectively. MD is also supported by a Royal Society Wolfson Research Merit Award.




\bibliographystyle{IEEEbib}
\bibliography{refs}

\newpage
\section{appendix}
\subsection{Proof for Theorem \ref{T1}}
\begin{proof}
\begin{eqnarray*} 
    \|h_{i+1}\|_2 &=& \|x_{i+1}-x^\star\|_2 \\ &=&\|\mathcal{P}_\mathcal{K} (x_{i}-\eta(A^TS^{t^T}S^tAx_{i}-A^TS^{t^T}S^ty))-x^\star\|_2,
\end{eqnarray*}
then because of the distance preservation of translation \cite[Lemma 6.3]{2015_Oymak_Sharp}, we have:
\begin{eqnarray*}            &\stackrel{\text{(a)}}{=}&\|\mathcal{P}_{\mathcal{K}-x^\star} (x_i-x^\star -\eta(A^TS^{t^T}S^tAx^{i}-A^TS^{t^T}S^ty))\|_2 ,
\end{eqnarray*}
then we apply \cite[Lemma 6.4]{2015_Oymak_Sharp}, where $\mathcal{C}$ is the smallest close cone containing the set $\mathcal{K} - x^\star$:
\begin{eqnarray*} 
    &\stackrel{\text{(b)}}{\leq}& \|\mathcal{P}_{\mathcal{C}} (x_i-x^\star -\eta(A^TS^{t^T}S^tAx^{i}-A^TS^{t^T}S^ty))\|_2,
\end{eqnarray*}
\begin{eqnarray*} 
    &=& \|\mathcal{P}_{\mathcal{C}} (h_i +\eta A^TS^{t^T}S^ty-\eta(A^TS^{t^T}S^tA(h_{i}-x^\star))\|_2 \\ 
    &=& \|\mathcal{P}_{\mathcal{C}} ((I-\eta A^TS^{t^T}S^tA)h_i + \eta A^TS^{t^T}S^t(y-Ax^\star)\|_2 \\ 
    &=& \|\mathcal{P}_{\mathcal{C}} ((I-\eta A^TS^{t^T}S^tA)h_i + \eta A^TS^{t^T}S^tw\|_2,
\end{eqnarray*}
then  because of the definition of the cone-projection operator \cite[Lemma 6.2]{2015_Oymak_Sharp} we have:
\begin{eqnarray*} 
    &\stackrel{\text{(c)}}{=}& \sup_{v \in \mathcal{C} \cap \mathcal{B}^d} v^T[(I-\eta A^TS^{t^T}S^tA)h_i +   \eta A^TS^{t^T}S^tw)] \\   
    &\leq& \{\sup_{v \in \mathcal{C} \cap \mathcal{B}^d} v^T(I-\eta A^TS^{t^T}S^tA)h_i \\
    &&+   \eta\sup_{v \in \mathcal{C} \cap \mathcal{B}^d} v^T( A^TS^{t^T}S^tw)) \},
\end{eqnarray*}
next we tidy up the supremums terms by the definition of $\alpha(\eta,S^tA)$ and $\beta(S^t,A)$ in Definition 1:
\begin{eqnarray*} 
    &\leq& \{\sup_{v \in \mathcal{C} \cap \mathcal{B}^d} (v^T(I-\eta A^TS^{t^T}S^tA)\frac{h_i}{\|h_i\|_2} )\|h_i\|_2\\
    &&+   \eta\sup_{v \in \mathcal{C} \cap \mathcal{B}^d} (v^T A^TS^{t^T}S^t\frac{w}{\|w\|_2})\|w\|_2) \}\\ 
    &\leq& \{\sup_{u,v \in \mathcal{C} \cap \mathcal{B}^d} (v^T(I-\eta A^TS^{t^T}S^tA)u )\|h_i\|_2\\
    &&+   \eta\sup_{v \in \mathcal{C} \cap \mathcal{B}^d} (v^T A^TS^{t^T}S^t\frac{w}{\|w\|_2})\|w\|_2) \}\\ 
    &\leq& \{\sup_{u,v \in \mathcal{C} \cap \mathcal{B}^d} (v^T(I-\eta A^TS^{t^T}S^tA)u )\|h_i\|_2\\
    &&+   \eta L\sup_{v \in A\mathcal{C} \cap \mathcal{B}^n} (v^T S^{t^T}S^t\frac{w}{\|w\|_2})\|w\|_2) \}\\  
    &=& \alpha(\eta,S^tA)\|h_i\|_2 + \eta mL\beta(S^t,A)\|w\|_2  
\end{eqnarray*}
Then we do recursive substitution:
\begin{eqnarray*} 
    \|h_{i+1}\|_2 &\leq& \alpha^i(\eta,S^tA) \|h_0\|_2 \\
    &&+  m\eta\beta(S^t,A)\frac{1-\alpha^i(\eta,S^tA)}{1-\alpha(\eta,S^tA)}\|w\|_2\\
    &\leq& \alpha^i(\eta,S^tA) \|h_0\|_2 + \frac{m\eta L\beta(S^t,A)}{1-\alpha(\eta,S^tA)}\|w\|_2
\end{eqnarray*} 
\end{proof}

\subsection{Proofs for the explicit bounds}
\subsubsection{Proof for Lemma \ref{L1}}
\begin{proof}
From the results in \cite[Lemma 6.8]{2015_Oymak_Sharp} we can have the following bounds with probability at least $1 - 4e^{-\frac{\theta^2}{8}}$:
\begin{eqnarray*}
    \|SA(u + v)\|_2 &\geq& b_m\|A(u + v)\|_2 - 2\sqrt{\mu_c}(\mathcal{W} + \theta)\\
    &\geq&  \sqrt{\mu_c}(b_m\|u + v\|_2 - 2(\mathcal{W} + \theta)),
\end{eqnarray*}
\begin{eqnarray*}
    \|SA(u - v)\|_2 &\leq& b_m\|A(u - v)\|_2 + 2\sqrt{L}(\mathcal{W} + \theta)\\
    &\leq&  \sqrt{L}(b_m\|u - v\|_2 + 2(\mathcal{W} + \theta)),
\end{eqnarray*}
then we can have:
\begin{eqnarray*}
\alpha &\leq& \frac{1}{4} \left\{ \| u + v\|_2^2 - \eta\|SA(u + v)\|_2^2 - \|u - v\|_2^2 + \eta\|SA(u - v)\|_2^2 \right\} \\
&\leq& \frac{1}{4} \{ \| u + v\|_2^2 - \eta \mu_c(b_m\|u - v\|_2 - 2(\mathcal{W} + \theta))^2 \\
&&- \|u - v\|_2^2 + \eta L(b_m\|u - v\|_2 + 2(\mathcal{W} + \theta))^2 \} \\
&\leq& \frac{1}{4} \{ \| u + v\|_2^2 \\
&& - \eta \mu_c(b_m^2\|u - v\|_2^2 + 4(\mathcal{W} + \theta)^2 -2 b_m(\mathcal{W} + \theta)\|u + v\|_2) \\
&&- \|u - v\|_2^2\\
&&+ \eta L(b_m\|u - v\|_2 + 4(\mathcal{W} + \theta)^2 + 2 b_m(\mathcal{W} + \theta)\|u - v\|_2) \} \\
&\leq& \frac{1}{4}\{ (1 - \eta\mu_c b_m^2)\|u + v\|_2^2\\
&&+ (\eta L b_m^2 - 1)\|u - v\|_2^2\\
&&+ 4 \eta (\mathcal{W} + \theta)^2(L - \mu_c) \\
&&+ 2 \eta b_m(\mathcal{W} + \theta)(L\|u - v\|_2 + \mu_c\|u + v\|_2) \},
\end{eqnarray*}
Now we set $\eta = \frac{1}{b_m^2L}$, since $L \geq \mu_c$, $\|u + v\|_2^2 \leq 4$, $\|u - v\|_2^2 \leq 4$  and $\|u - v\|_2 + \|u + v\|_2 \leq 2\sqrt{2}$, we have:
\begin{eqnarray*}
\alpha &\leq&\frac{1}{4} \{ 4(1 - \frac{\mu_c}{L}) + \frac{4(\mathcal{W} + \theta)^2}{b_m^2 }(1 - \frac{\mu_c}{L}) + \frac{8\sqrt{2}(\mathcal{W} + \theta)}{b_m} \} \\
 &=& (1 - \frac{\mu_c}{L})(1 + \frac{(\mathcal{W} + \theta)^2}{b_m^2}) + \frac{2\sqrt{2}(\mathcal{W} + \theta)}{b_m},
\end{eqnarray*}
with probability at least $1 - 8e^{-\frac{\theta^2}{8}}$. Thus finishes the proof.

\end{proof}

\subsection{The proof for Lemma \ref{L2}}
\begin{proof}
In this proof we first state two inequality from \cite[Lemma 3]{tang2016gradient} which is extented the from \cite[Lemma 6.7]{2015_Oymak_Sharp}:
\begin{equation}
    \|SA(u + v)\|_2 \geq \sqrt{\mu_c}(b_m - \mathcal{W} - \theta) \|u + v\|_2,
\end{equation}
\begin{equation}
    \|SA(u - v)\|_2 \leq \sqrt{L}(b_m + \sqrt{d} + \theta) \|u - v\|_2,
\end{equation}
with probability at least $1 - e^{-\frac{\theta^2}{2}}$, then we can have:
\begin{eqnarray*}
\alpha &\leq& \frac{1}{4} \left\{ \| u + v\|_2^2 - \eta\|SA(u + v)\|_2^2 - \|u - v\|_2^2 + \eta\|SA(u - v)\|_2^2 \right\} \\
&\leq& \frac{1}{4} \{ \| u + v\|_2^2 - \eta\mu_c(b_m - \mathcal{W} - \theta)^2\|u + v\|_2^2 \\
&&- \|u - v\|_2^2 + \eta L(b_m + \sqrt{d} + \theta)^2\|u - v\|_2^2 \} \\
\end{eqnarray*}
Let $\eta = \frac{1}{L(b_m + \sqrt{d} + \theta)^2}$ we have:
\begin{eqnarray*}
\alpha &\leq& \frac{1}{4}\{ 1 - \frac{\mu_c(b_m - \mathcal{W} - \theta)^2}{L(b_m + \sqrt{d} + \theta)^2}\}\|u + v\|_2^2 \\
&\leq& \{ 1 - \frac{\mu_c(b_m - \mathcal{W} - \theta)^2}{L(b_m + \sqrt{d} + \theta)^2}\}
\end{eqnarray*}
with probability at least $(1-2e^{-\frac{\theta^2}{2}})$.
\end{proof}

The proof for Lemma \ref{L3} is almost the same as the proof of \cite[Proposition 2]{tang2016gradient} hence is not needed to be included here.

\end{document}